\documentclass[reqno,12pt]{amsart}
\usepackage{amsmath,amsthm,amssymb,amsfonts,epsfig,color,graphicx,enumerate,accents,subfigure}
\usepackage[a4paper,margin=2.98truecm]{geometry}

\def\eps{{\varepsilon}}

\def\R{\mathbb{R}}
\def\O{\Omega}

\def\A{\mathcal{A}}

\def\F{\mathcal{F}}
\def\HH{\mathcal{H}}

\def\PP{\mathcal{P}}

\newcommand{\be}{\begin{equation}}
\newcommand{\ee}{\end{equation}}
\newcommand{\bib}[4]{\bibitem{#1}{\sc#2: }{\it#3. }{#4.}}
\newcommand{\cp}{\mathop{\rm cap}\nolimits}

\numberwithin{equation}{section}
\theoremstyle{plain}
\newtheorem{theo}{Theorem}[section]

\newtheorem{prop}[theo]{Proposition}

\theoremstyle{remark}
\newtheorem{rema}[theo]{Remark}
\newtheorem{exam}[theo]{Example}

\title[Optimal shapes for general integral functionals]{Optimal shapes for general integral functionals}

\author{Giuseppe Buttazzo}
\address{Dipartimento di Matematica, Universit\`a di Pisa, Largo B. Pontecorvo 5, 56126 Pisa, ITALY}
\email{giuseppe.buttazzo@unipi.it}

\author{Harish Shrivastava}
\address{Dipartimento di Matematica, Universit\`a di Pisa, Largo B. Pontecorvo 5, 56126 Pisa, ITALY}
\email{harish.niser@gmail.com}


\begin{document}
\maketitle

\begin{abstract}
We consider shape optimization problems for general integral functionals of the calculus of variations, defined on a domain $\O$ that varies over all subdomains of a given bounded domain $D$ of $\R^d$. We show in a rather elementary way the existence of a solution that is in general a quasi open set. Under very mild conditions we show that the optimal domain is actually open and with finite perimeter. Some counterexamples show that in general this does not occur.
\end{abstract}

\medskip
\textbf{Keywords:} shape optimization, quasi open sets, finite perimeter, integral functionals.

\textbf{2010 Mathematics Subject Classification:} 49Q10, 49A15, 49A50, 35J20, 35D10

\section{Introduction}\label{sintr}

In this paper we consider a shape optimization problem for a general integral functional of the form
\be\label{intfun}
F(u,\O)=\int_\O f(x,u,\nabla u)\,dx.
\ee
Setting
$$\F(\O)=\min\big\{F(u,\O)\ :\ u\in W^{1,p}_0(\O)\big\}$$
the problem we are dealing with is written as
\be\label{pbintro}
\min\big\{\F(\O)\ :\ \O\subset D\big\}.
\ee
Here $p>1$ is a fixed real number, $D$ is a given bounded domain of $\R^d$, and $f$ is a general integrand satisfying suitable rather mild assumptions. Note that in problem \eqref{pbintro} the volume constraint can be incorporated into the cost functional $\F$ by means of a Lagrange multiplier of the form $\lambda|\O|$ or more generally $\int_\O\lambda(x)\,dx$. For a detailed presentation of shape optimization problems we refer the interested reader to the books \cite{bubu05} and \cite{hepi05}. 

The first result is Theorem \ref{exth}, which gives the existence of an optimal domain $\O_{opt}$. This optimal domain belongs to the class of $p$-quasi open sets, defined as the sets $\{u>0\}$ for some function $u\in W^{1,p}_0(D)$. As a consequence, if $p>d$ these optimal sets are actually open, but if $p\le d$ this fact does not occur any more under the very general assumptions we made, see Example \ref{exnotopen}.

The existence of optimal sets $\O_{opt}$ could have been obtained through a generalization of a result in \cite{bdm93} to the case $p>1$, making use of a $\gamma_p$-convergence on the class of $p$-quasi open sets. However, we have preferred to give an independent proof that, in the particular case of integral functionals of the form \eqref{intfun}, is much simpler.

In order to obtain that the optimal sets $\O_{opt}$ are open, we need slightly stronger assumptions: this is the goal of Theorem \ref{thopen}, in which we use the H\"older continuity result of \cite{gigi84}, \cite{giu03} on the minimizers of general integral functionals.

Finally, in Theorem \ref{tper} we prove, under rather general assumptions on the integrand $f$, that $\O_{opt}$ has a finite perimeter. This result is obtained by adapting a previous result of \cite{db2012} to the general case of an integrand $f$ with a $p$-growth.

\section{Setting of the problem and existence result}\label{sexis}

We recall here some well-known notions from the Sobolev spaces theory; for all details we refer to \cite{bubu05} and to \cite{maz11}.

In all the paper $p>1$ will be a fixed real number. For every set $E\subset\R^d$ the $p$-capacity of $E$ is defined as
$$\cp_p(E)=\inf\Big\{\|u\|_{W^{1,p}(\R^d)}\ :\ u\in W^{1,p}(\R^d),\ u\ge1\hbox{ a.e. in a neighborhood of }E\Big\}.$$
We say that a property $\PP(x)$ holds {\it$p$-quasi everywhere} (shortly q.e.) in a set $E$ if the set of points of $x\in E$ for which $\PP(x)$ does not hold has $p$-capacity zero; the expression {\it almost everywhere} (shortly a.e.) refers, as usual, to the Lebesgue measure.

A set $\O$ is called {\it$p$-quasi open} if $\O=\{u>0\}$ for a suitable function $u\in W^{1,p}(\R^d)$. It has to be noticed that, since for $p>d$ the $W^{1,p}(\R^d)$ functions are H\"older continuous, we have that in this case $p$-quasi open sets are actually open. If $\O$ is $p$-quasi open we may define the Sobolev space $W^{1,p}_0(\O)$ as
$$W^{1,p}_0(\O)=\big\{u\in W^{1,p}(\R^d)\ :\ u=0\hbox{ $\cp_p$ q.e. on }\R^d\setminus\O\big\}.$$
We notice that this definition coincides with the usual one in the case when $\O$ is open.

In the following we fix a bounded domain $D$ of $\R^d$ and we consider the admissible class
$$\A=\big\{\O\subset D\ :\ \O\hbox{ $p$-quasi open}\big\}.$$
For every $\O\in\A$ and $u\in W^{1,p}_0(\O)$ we define the integral functional
\be\label{intfun}
F(u,\O)=\int_\O f(x,u,\nabla u)\,dx
\ee
where the integrand $f$ is assumed to verify the following conditions:
\begin{itemize}
\item[(f1)]$f(x,s,z)$ is measurable in $x$, lower semicontinuous in $(s,z)$, convex in $z$;
\item[(f2)]there exist $c>0$, $a\in L^1(D)$, and $\alpha<\lambda_{1,p}(D)$ such that
$$c\big(|z|^p-\alpha|s|^p-a(x)\big)\le f(x,s,z)\qquad\hbox{for every $x,s,z$},$$
being $\lambda_{1,p}(D)$ the first Dirichlet eigenvalue of the $p$-Laplacian on $D$, defined as
$$\lambda_{1,p}(D)=\min\left\{\int_D|\nabla u|^p\,dx\ :\ u\in W^{1,p}_0(D),\ \int_D|u|^p\,dx=1\right\}.$$
\item[(f3)]$f(x,0,0)\ge0$.
\end{itemize}
It is well-known (see for instance \cite{bu89}) that under conditions (f1) and (f2) for every $\O\in\A$ the functional $F(\cdot,\O)$ defined in \eqref{intfun} is lower semicontinuous with respect to the weak convergence in $W^{1,p}_0(\O)$ and that the minimum problem
\be\label{minf}
\min\left\{F(u,\O)\ :\ u\in W^{1,p}_0(\O)\right\}
\ee
admits a solution. Let us denote by $\F(\O)$ the minimum value in \eqref{minf}. The shape optimization problem we deal with is
\be\label{shoptp}
\min\big\{\F(\O)\ :\ \O\in\A\big\}
\ee

In the following theorem we prove that the shape optimization problem above admits a solution. For the proof we could use the general theory of $\gamma$-convergence and weak $\gamma$-convergence (see \cite{bubu05}), and the fact that the shape functional $\F$ is monotonically decreasing with respect to the set inclusion; however, in our case a simpler proof is available and we report this one.

\begin{theo}\label{exth}
Under assumptions (f1), (f2), (f3) the shape optimization problem \eqref{shoptp} admits a solution.
\end{theo}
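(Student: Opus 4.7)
The plan is to use the direct method. Since $\F(\emptyset)=0$ and (f2) combined with the Poincar\'e inequality (using $\alpha<\lambda_{1,p}(D)$) yields the uniform lower bound $\F(\O)\ge-c\|a\|_{L^1(D)}$, the infimum is finite. Let $(\O_n)\subset\A$ be a minimizing sequence and let $u_n\in W^{1,p}_0(\O_n)$ be the corresponding minimizer of $F(\cdot,\O_n)$, extended by zero to an element of $W^{1,p}_0(D)$. The same coercivity estimate
$$\F(\O_n)=F(u_n,\O_n)\ge c\left(1-\frac{\alpha}{\lambda_{1,p}(D)}\right)\int_D|\nabla u_n|^p\,dx-c\|a\|_{L^1(D)}$$
bounds $\{u_n\}$ in $W^{1,p}_0(D)$; up to a subsequence we may assume $u_n\rightharpoonup u$ weakly in $W^{1,p}_0(D)$, strongly in $L^p(D)$, and $p$-quasi everywhere.

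I would take the candidate optimal set to be $\O_{opt}:=\{|u|>0\}$, which is $p$-quasi open by definition, with $u\in W^{1,p}_0(\O_{opt})$ so that $\F(\O_{opt})\le F(u,\O_{opt})$. Using (f3), I may also replace $\O_n$ by $\{|u_n|>0\}$ without increasing $\F(\O_n)$, since this only subtracts the nonnegative quantity $\int_{\O_n\setminus\{u_n\neq0\}}f(x,0,0)\,dx$ from $F(u_n,\O_n)$. The essential step is the lower-semicontinuity inequality $F(u,\O_{opt})\le\liminf_nF(u_n,\O_n)$. Applying the classical weak lsc of $v\mapsto\int_Bf(x,v,\nabla v)\,dx$ on $W^{1,p}_0(D)$ (which holds for any fixed measurable $B\subset D$ under (f1)--(f2)) with $B=\O_{opt}$ gives $F(u,\O_{opt})\le\liminf_n\int_{\O_{opt}}f(x,u_n,\nabla u_n)\,dx$. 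The identity
$$\int_{\O_{opt}}f(x,u_n,\nabla u_n)\,dx=F(u_n,\O_n)+\int_{\O_{opt}\setminus\O_n}f(x,0,0)\,dx-\int_{\O_n\setminus\O_{opt}}f(x,u_n,\nabla u_n)\,dx,$$
which uses $u_n=0$ and $\nabla u_n=0$ a.e.\ on $D\setminus\O_n$, then reduces everything to showing that the two ``correction'' integrals behave appropriately in the limit.

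The principal obstacle I expect is the careful treatment of these two corrections on the symmetric difference $\O_{opt}\triangle\O_n$. On $\O_{opt}$ the a.e.\ convergence $u_n\to u\ne0$ forces $\chi_{\O_{opt}\setminus\O_n}\to0$ a.e., so the first correction vanishes by dominated convergence; this requires a local $L^1$ majorant for $f(\cdot,0,0)$ on $\O_{opt}$, which should be extractable from the a priori finiteness of $F(u,\O_{opt})$. On $\O_n\setminus\O_{opt}\subset\{u=0\}$, the strong $L^p$ convergence $u_n\to0$ combined with (f2) gives a liminf bound of $-c\int_{\O_n\setminus\O_{opt}}|a|\,dx$ for the second correction; this tends to zero by absolute continuity of the Lebesgue integral provided $|\O_n\setminus\O_{opt}|\to0$. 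When the latter fails for the original minimizing sequence, it can be enforced by a preliminary truncation $u_n\mapsto\operatorname{sign}(u_n)(|u_n|-\eps_n)^+$ with $\eps_n\to0$ slow enough that the weak convergence to $u$ and the minimizing property are preserved.
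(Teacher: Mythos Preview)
Your route is the direct method on the sequence of domains, which is precisely the $\gamma$-convergence-style argument the paper explicitly chooses to bypass. The paper's proof is much shorter: it introduces the single auxiliary problem
\[
\min\Big\{\int_D f(x,u,\nabla u)\,1_{\{u\ne0\}}\,dx\ :\ u\in W^{1,p}_0(D)\Big\},
\]
observes that the integrand $f(x,s,z)-f(x,0,0)1_{\{s=0\}}$ is still lsc in $(s,z)$ (here (f3) is used), convex in $z$, and coercive by (f2), so a minimizer $\bar u$ exists; then $\overline\O=\{\bar u\ne0\}$ is shown to be optimal by a two-line comparison, using only that $\int_\O f(x,0,0)1_{\{u_\O=0\}}\,dx\ge0$. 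No minimizing sequence of shapes, no symmetric-difference bookkeeping, no truncations. Notice that your own observation---that one may replace $\O_n$ by $\{u_n\ne0\}$ at no cost thanks to (f3)---already says that the shape problem is equivalent to minimizing $\int_D f(x,u,\nabla u)1_{\{u\ne0\}}\,dx$ over $u\in W^{1,p}_0(D)$; pushing this one step further, i.e.\ proving lsc for \emph{that} functional directly on $D$, gives the paper's argument and makes the two correction terms disappear.

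As written, your argument has a genuine gap in the treatment of the corrections. For the first term $\int_{\O_{opt}\setminus\O_n} f(x,0,0)\,dx$ you invoke dominated convergence, which needs $f(\cdot,0,0)\in L^1(\O_{opt})$; your proposed justification ``extractable from the finiteness of $F(u,\O_{opt})$'' is incorrect, since $\int_{\O_{opt}} f(x,u,\nabla u)\,dx<\infty$ says nothing about $\int_{\O_{opt}} f(x,0,0)\,dx$ under (f1)--(f3), which impose no upper growth on $f$. For the second correction, the truncation $u_n\mapsto\operatorname{sign}(u_n)(|u_n|-\eps_n)^+$ does force $|\O_n'\setminus\O_{opt}|\to0$, but to keep $(\O_n',u_n')$ a minimizing sequence you must control $\int_{\{|u_n|>\eps_n\}}\big[f(x,u_n',\nabla u_n)-f(x,u_n,\nabla u_n)\big]\,dx$ and $\int_{\{0<|u_n|\le\eps_n\}} f(x,u_n,\nabla u_n)\,dx$; the first requires some modulus of continuity of $f$ in $s$ (not assumed), and the second requires either an upper growth bound or $|\{0<|u_n|\le\eps_n\}|\to0$, neither of which you have arranged. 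These obstacles are exactly what the auxiliary-problem formulation avoids.
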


\begin{proof}
Consider the auxiliary minimum problem
\be\label{auxpb}
\min\left\{\int_D f(x,u,\nabla u)1_{\{u\ne0\}}\,dx\ :\ u\in W^{1,p}_0(D)\right\}.
\ee
Since
\[\begin{split}
\int_D f(x,u,\nabla u)1_{\{u\ne0\}}\,dx
&=\int_D f(x,u,\nabla u)\,dx-\int_D f(x,0,0)1_{\{u=0\}}\,dx\\
&=\int_D\big[f(x,u,\nabla u)-f(x,0,0)\big]\,dx+\int_D f(x,0,0)1_{\{u\ne0\}}\,dx,
\end{split}\]
Problem \eqref{auxpb} can be rewritten as
$$\min\left\{\int_D\big[f(x,u,\nabla u)-f(x,0,0)+f(x,0,0)1_{\{u\ne0\}}\big]\,dx\ :\ u\in W^{1,p}_0(D)\right\}$$
and, thanks to assumptions (f1) and (f2), it verifies the lower semicontinuity and coercivity properties that guarantee it admits a solution $\bar u$. We claim that the $p$-quasi open set $\overline\O=\{\bar u\ne0\}$ solves the shape optimization problem \eqref{shoptp}. Indeed, let $\O\in\A$ and let $u_\O$ be the solution of the minimum problem \eqref{minf}; then we have
\[\begin{split}
\F(\O)&=\int_\O f(x,u_\O,\nabla u_\O)\,dx\\
&=\int_D f(x,u_\O,\nabla u_\O)\,dx-\int_{D\setminus\O}f(x,0,0)\,dx\\
&=\int_D f(x,u_\O,\nabla u_\O)1_{\{u_\O\ne0\}}\,dx+\int_D f(x,0,0)\big[1_{\{u_\O=0\}}-1_{D\setminus\O}\big]\,dx\\
&\ge\int_D f(x,\bar u,\nabla\bar u)1_{\{\bar u\ne0\}}\,dx+\int_\O f(x,0,0)1_{\{u_\O=0\}}\,dx\ge\F(\overline\O)
\end{split}\]
where the last inequality follows from the definition of $\overline\O$ and from assumption (f3).
\end{proof}

\section{Cases when optimal domains are open}\label{sopen}

In the present section we show that, under mild additional assumptions on the integrand $f$, the optimal domain $\overline\O$ of problem \eqref{shoptp}, obtained in Theorem \ref{exth} is actually an open set. To do this we show that the solution $\bar u$ of the auxiliary minimum problem \eqref{auxpb} is a continuous function. This follows by means of a well-known result of Giaquinta and Giusti in \cite{gigi84} (see also \cite{giu03}), that we summarize here below for the sake of completeness.

\begin{theo}\label{gigi}
Let $\bar u$ be a solution of the minimum problem
$$\min\left\{\int_D h(x,u,\nabla u)\,dx\ :\ u\in W^{1,p}_0(D)\right\}$$
where the integrand $h$ satisfies the condition
\be\label{gigicond}
c\big(|z|^p-b(x)|s|^\gamma-g(x)\big)\le h(x,s,z)\le C\big(|z|^p+b(x)|s|^\gamma+g(x)\big)
\ee
for all $x,s,z$, where $p>1$, $0<c\le C$, $p\le\gamma<p^*$, $b\in L^q_{loc}(D)$, $g\in L^\sigma_{loc}(D)$ being $p^*=dp/(d-p)$ ($p^*=+\infty$ if $p\ge d$) the Sobolev exponent relative to $p$, $\sigma>d/p$, $q>p^*/(p^*-\gamma)$. Then $\bar u$ is locally H\"older continuous in $D$.
\end{theo}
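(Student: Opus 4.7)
The plan is to follow the classical De Giorgi strategy adapted to minimizers of non-quadratic integral functionals, as developed by Giaquinta and Giusti. The key is to show that $\bar u$ belongs to a De Giorgi class $DG^p(D)$, i.e.\ it satisfies a Caccioppoli-type estimate on super- and sub-level sets, and then to invoke the general regularity theory that elements of $DG^p$ are locally H\"older continuous.

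First I would fix a ball $B_r(x_0)\subset\subset D$, a level $k\in\R$, and a radius $\rho<r$, and choose a cutoff $\zeta\in C^\infty_c(B_r)$ with $\zeta\equiv1$ on $B_\rho$, $0\le\zeta\le1$, and $|\nabla\zeta|\le C/(r-\rho)$. Test the minimality of $\bar u$ against the competitor $v=\bar u-\zeta(\bar u-k)_+$, which differs from $\bar u$ only on the set $A_{k,r}=\{\bar u>k\}\cap B_r$. Using the upper bound in \eqref{gigicond} on the integrand evaluated at $v,\nabla v$ and the lower bound at $\bar u,\nabla\bar u$, and expanding $\nabla v=(1-\zeta)\nabla\bar u-(\bar u-k)\nabla\zeta$ on $A_{k,r}$, we obtain after rearrangement a Caccioppoli-type inequality of the form
\[
\int_{A_{k,\rho}}|\nabla\bar u|^p\,dx\le \frac{C}{(r-\rho)^p}\int_{A_{k,r}}(\bar u-k)^p\,dx+C\int_{A_{k,r}}\big(b(x)|\bar u|^\gamma+g(x)\big)\,dx.
\]
An analogous inequality holds for the sub-level sets $\{\bar u<k\}$ by testing with $\bar u+\zeta(\bar u-k)_-$.

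The second ingredient is to absorb the lower-order terms into the right form. Using H\"older's inequality, the integral $\int_{A_{k,r}} g\,dx$ is bounded by $\|g\|_{L^\sigma}|A_{k,r}|^{1-1/\sigma}$, and the condition $\sigma>d/p$ is precisely what gives an exponent $1-1/\sigma>1-p/d$, the threshold needed for De Giorgi's lemma to close. Similarly, $\int_{A_{k,r}}b|\bar u|^\gamma\,dx$ is estimated via H\"older and the Sobolev embedding $W^{1,p}\hookrightarrow L^{p^*}$; the hypothesis $q>p^*/(p^*-\gamma)$ is the sharp requirement for $b|\bar u|^\gamma$ to contribute a summable remainder with the correct measure-theoretic exponent. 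Combining these, $\bar u$ satisfies the defining inequalities of the class $DG^p(D)$.

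Finally, I would invoke the De Giorgi--Ladyzhenskaya--Ural'tseva theorem, in the form presented in Chapter 7 of \cite{giu03}, which asserts that any function in $DG^p$ is locally bounded and locally H\"older continuous, with the H\"older exponent depending only on $d,p,\gamma,\sigma,q$ and the norms of $b,g$. This concludes the proof. The main technical obstacle is the bookkeeping in the second step: one has to choose the exponents in the H\"older chain carefully so that the lower-order terms produce the extra measure factor $|A_{k,r}|^{1+\kappa}$ with $\kappa>0$ required by De Giorgi's iteration, and this is exactly what the sharp assumptions on $\sigma,q,\gamma$ are designed to deliver.
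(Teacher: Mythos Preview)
The paper does not give its own proof of this theorem: it is stated as a known result, quoted ``for the sake of completeness'' from \cite{gigi84} and \cite{giu03}, and is then applied as a black box in Theorem~\ref{thopen}. Your outline is a faithful sketch of the classical De Giorgi strategy used in those references (Caccioppoli inequality on level sets via testing with $\bar u-\zeta(\bar u-k)_\pm$, control of the lower-order terms by H\"older and Sobolev with the sharp exponent conditions on $\sigma$ and $q$, and the $DG^p$ regularity theorem), so there is nothing to compare: you are reproducing exactly the argument the paper is citing.
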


\begin{rema}
In the paper \cite{gigi84} the integrand $h$ above was assumed of Carath\'eodory type, but in fact condition (f1) is still enough, provided condition \eqref{gigicond} is satisfied. Actually, as the authors say, even the convexity of $h$ with respect to $z$ is not needed, if we assume that a solution $\bar u$ exists.
\end{rema}

\begin{rema}
Of course, the result above is nontrivial only in the case $p\le d$; indeed, if $p>d$ the H\"older continuity of $\bar u$ simply follows from the Sobolev embedding theorem.
\end{rema}

We can now apply Theorem \ref{gigi} to obtain that in a large number of situations the optimal set $\overline\O$ obtained in Theorem \ref{exth} is actually an open set.

\begin{theo}\label{thopen}
Assume that the integrand $f$ satisfies conditions (f1), (f2), (f3), \eqref{gigicond}
Then the optimal domain $\overline\O$ of problem \eqref{shoptp}, obtained in Theorem \ref{exth} is an open set.
\end{theo}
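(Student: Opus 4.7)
The plan is to deduce that $\bar u$, the solution of the auxiliary problem \eqref{auxpb} constructed in the proof of Theorem \ref{exth}, is locally H\"older continuous in $D$. Since $\overline\O=\{\bar u\neq 0\}$, the continuity of $\bar u$ will immediately yield that $\overline\O$ is open, as the preimage of $\R\setminus\{0\}$ under a continuous map.

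The first step is to extract from the minimality of $\bar u$ in \eqref{auxpb} a local quasi-minimality condition for the unpenalized functional $H(u):=\int_D f(x,u,\nabla u)\,dx$. Since $\nabla u=0$ a.e.\ on $\{u=0\}$ for any $u\in W^{1,p}_0(D)$, one has the pointwise identity
$$f(x,u,\nabla u)1_{\{u\neq 0\}}=f(x,u,\nabla u)-f(x,0,0)1_{\{u=0\}}\qquad\text{a.e. in }D.$$
Testing $J(\bar u)\le J(\bar u+\varphi)$ with $\varphi\in W^{1,p}_0(B)$ for a ball $B$ compactly contained in $D$, subtracting the integrals on $D\setminus B$, and using $f(x,0,0)\ge0$ from (f3) to discard the favorable sign, I obtain
$$\int_B f(x,\bar u,\nabla\bar u)\,dx\le\int_B f(x,\bar u+\varphi,\nabla(\bar u+\varphi))\,dx+\omega(B),\qquad\omega(B):=\int_B f(x,0,0)\,dx.$$

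Next I would control the error $\omega(B)$ by means of \eqref{gigicond} evaluated at $s=z=0$, giving $0\le f(x,0,0)\le Cg(x)$ with $g\in L^\sigma_{loc}(D)$ and $\sigma>d/p$. H\"older's inequality on a ball of radius $r$ then yields $\omega(B)\le C\,r^{d(1-1/\sigma)}$, and the condition $\sigma>d/p$ makes the exponent strictly larger than $d-p$. This is exactly the decay required to run a De Giorgi-type iteration for quasi-minimizers.

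Finally, the above quasi-minimality, together with the two-sided growth \eqref{gigicond}, places $\bar u$ within the scope of the H\"older regularity theory of Giaquinta-Giusti, which is well known to extend from exact minimizers to quasi-minimizers with an error of the form $\omega(B_r)\lesssim r^{d-p+\delta}$ for some $\delta>0$ (see \cite{giu03}). I conclude that $\bar u$ is locally H\"older continuous in $D$, and hence $\overline\O$ is open.

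The main technical point is the careful bookkeeping of the indicator functions $1_{\{\bar u\neq 0\}}$ and $1_{\{\bar u+\varphi\neq 0\}}$ when passing from the minimality of $\bar u$ for $J$ to a quasi-minimality statement for $H$, and the invocation of the extension of Theorem \ref{gigi} from exact minimizers to quasi-minimizers, which is classical but slightly beyond the formulation recalled in the excerpt.
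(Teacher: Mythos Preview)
Your argument is correct, but it takes a more circuitous route than the paper's. The paper observes directly that $\bar u$ is an \emph{exact} minimizer of the functional $\int_D h(x,u,\nabla u)\,dx$ with
\[
h(x,s,z)=f(x,s,z)-f(x,0,0)1_{\{s=0\}},
\]
and simply checks that $h$ still satisfies the two-sided growth bound \eqref{gigicond} (the subtracted term is bounded between $0$ and $Cg(x)$, so it only perturbs the function $g$). Theorem \ref{gigi} then applies verbatim, with no need to pass through quasi-minimality or to invoke the almost-minimizer extension.

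Your approach instead keeps the original integrand $f$, isolates the indicator correction as an additive error $\omega(B)=\int_B f(x,0,0)\,dx\le C\,r^{d(1-1/\sigma)}$, and appeals to the $\omega$-minimizer version of the Giaquinta--Giusti theory. This is valid --- the required decay $d(1-1/\sigma)>d-p$ is exactly what the De Giorgi iteration needs --- but, as you yourself note, it relies on a statement ``slightly beyond the formulation recalled in the excerpt''. The paper's trick of absorbing the error into a modified integrand $h$ avoids this detour entirely and stays within the exact hypotheses of Theorem \ref{gigi}; your route, on the other hand, makes more transparent \emph{why} the penalization term is harmless (its mass on small balls decays fast enough), which is a perspective that generalizes well to other penalizations.
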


\begin{proof}
Since $\overline\O=\{\bar u\ne0\}$ where $\bar u$ is a solution of the auxiliary problem \eqref{auxpb}, it is enough to show that the function $\bar u$ is continuous on $D$. We have for every $u\in W^{1,p}_0(D)$
$$\int_D f(x,u,\nabla u)1_{\{u\ne0\}}\,dx=\int_D \big[f(x,u,\nabla u)-f(x,0,0)1_{\{u=0\}}\big]\,dx$$
and the integrand
$$h(x,s,z)=f(x,s,z)-f(x,0,0)1_{\{s=0\}}$$
satisfies the conditions of Theorem \ref{gigi}. Then the H\"older continuity of $\bar u$ follows.
\end{proof}

\begin{rema}
In general, under the sole existence assumptions (f1), (f2), (f3), we do not expect that the optimal domain $\overline\O$ be open. In \cite{bhp05} the authors consider the particular case
$$F(u,\O)=\frac12\int_\O|\nabla u|^2\,dx-\langle h,u\rangle$$
under a volume constraint of the form $|\O|=m$, and refer to \cite{hay99} for a counterexample to the fact that the solution $\overline\O$ is open, when the function $h$ is in $H^{-1}(D)$. In the following section we show that a counterexample can be constructed even in the case $h\in H^{-1}(D)\cap L^1(D)$.
\end{rema}

\section{Optimal domains that are not open}\label{snotopen}

As we have seen in Theorem \ref{thopen} quite mild assumptions on the integrand $f$ imply that the optimal domains $\O_{opt}$ are open sets. In this section we show that, when these assumptions are not satisfied, the optimal domains may be not better than quasi open sets, even in very simple cases as the Dirichlet energy
$$F(u,\O)=\int_\O\Big[\frac1p|\nabla u|^p-f(x)u\Big]\,dx.$$

We start by a preliminary result.

\begin{prop}\label{noaux}
Let $f$ be an integrand satisfying conditions (f1) and (f2), and assume that $f(x,0,0)=0$. Let $\bar u$ be a solution of the minimum problem
$$\min\left\{\int_D f(x,u,\nabla u)\,dx\ :\ u\in W^{1,p}_0(D)\right\}$$
and denote by $\overline\O$ the $p$-quasi open set $\{\bar u\ne0\}$. Then $\overline\O$ is a solution of the shape optimization problem \eqref{pbintro}.
\end{prop}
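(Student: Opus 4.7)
The plan is to reduce the proposition directly to Theorem~\ref{exth} by observing that, under the extra hypothesis $f(x,0,0)=0$, the auxiliary problem \eqref{auxpb} used in the proof of Theorem~\ref{exth} collapses to the plain minimization considered in the statement here.

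First I would verify that on $W^{1,p}_0(D)$ the two functionals
$$J_1(u)=\int_D f(x,u,\nabla u)\,dx\qquad\text{and}\qquad J_2(u)=\int_D f(x,u,\nabla u)\,1_{\{u\ne0\}}\,dx$$
coincide. Their difference is $\int_{\{u=0\}}f(x,u,\nabla u)\,dx$, and invoking the classical Sobolev fact that $\nabla u=0$ a.e.\ on $\{u=0\}$, the integrand there reduces pointwise to $f(x,0,0)$, which vanishes by assumption. Hence $J_1(u)=J_2(u)$ for every $u\in W^{1,p}_0(D)$, so the two minimum problems share the same set of minimizers.

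Next, since $f(x,0,0)=0\ge 0$, hypothesis (f3) is automatically satisfied, and Theorem~\ref{exth} applies. Because $\bar u$ minimizes $J_1$, the previous step tells us that $\bar u$ is also a solution of the auxiliary problem \eqref{auxpb}, and so it can be taken as the distinguished minimizer produced in the proof of that theorem. The conclusion of Theorem~\ref{exth} then gives that $\overline\O=\{\bar u\ne0\}$ solves the shape optimization problem \eqref{shoptp}, which is \eqref{pbintro}.

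I do not foresee any real obstacle: the whole content of the proposition is the observation that the normalization $f(x,0,0)=0$ makes the cumbersome indicator $1_{\{u\ne 0\}}$ appearing in \eqref{auxpb} irrelevant, so the shape problem reduces to a single unconstrained PDE-type minimization on $D$. The only mildly nontrivial ingredient is the standard lemma $\nabla u=0$ a.e.\ on $\{u=0\}$ for Sobolev functions, and one could alternatively prove the proposition by a direct comparison argument: extending any $u_\O\in W^{1,p}_0(\O)$ by zero to $D$ and using $f(x,0,0)=0$ to convert $\F(\O)$ into $\int_D f(x,u_\O,\nabla u_\O)\,dx$, against which the minimality of $\bar u$ can be played.
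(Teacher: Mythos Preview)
Your argument is correct. The key observation that $J_1=J_2$ via $\nabla u=0$ a.e.\ on $\{u=0\}$ together with $f(x,0,0)=0$ is exactly right, and it does identify $\bar u$ as a minimizer of the auxiliary problem~\eqref{auxpb}; the chain of inequalities in the proof of Theorem~\ref{exth} then applies verbatim to yield the conclusion.

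The route, however, differs from the paper's. The paper does \emph{not} pass through Theorem~\ref{exth} or the auxiliary problem at all: it gives the self-contained direct comparison you sketch in your final paragraph. For an arbitrary $p$-quasi open $\O\subset D$ with minimizer $u_\O\in W^{1,p}_0(\O)$, extending $u_\O$ by zero and using $f(x,0,0)=0$ turns $\F(\O)$ into $\int_D f(x,u_\O,\nabla u_\O)\,dx$, which dominates $\int_D f(x,\bar u,\nabla\bar u)\,dx=\int_{\overline\O} f(x,\bar u,\nabla\bar u)\,dx\ge\F(\overline\O)$. The paper's approach has the advantage of being fully self-contained and invoking only the \emph{statement} of optimality of $\bar u$, whereas your main argument leans on the \emph{proof} (not the statement) of Theorem~\ref{exth}---the theorem itself only asserts existence of an optimal domain, not that any auxiliary minimizer produces one. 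Your reduction is slicker if one has Theorem~\ref{exth} firmly in hand, but the direct argument is arguably the more robust thing to write down.
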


\begin{proof}
Setting
$$\F(\O)=\min\left\{\int_\O f(x,u,\nabla u)\,dx\ :\ u\in W^{1,p}_0(\O)\right\}$$
we have to show that for every $p$-quasi open set $\O\subset D$ we have
\be\label{ovopt}
\F(\O)\ge\F(\overline\O).
\ee
Using the optimality of $\bar u$ and the fact that $f(x,0,0)=0$ we obtain
\[\begin{split}
\min\left\{\int_\O f(x,u,\nabla u)\,dx\ :\ u\in W^{1,p}_0(\O)\right\}
&=\min\left\{\int_D f(x,u,\nabla u)\,dx\ :\ u\in W^{1,p}_0(\O)\right\}\\
&\ge\min\left\{\int_D f(x,u,\nabla u)\,dx\ :\ u\in W^{1,p}_0(D)\right\}\\
&=\int_D f(x,\bar u,\nabla\bar u)\,dx=\int_{\overline\O}f(x,\bar u,\nabla\bar u)\,dx\\
&\ge\min\left\{\int_{\overline\O}f(x,u,\nabla u)\,dx\ :\ u\in W^{1,p}_0(\overline\O)\right\},
\end{split}\]
which implies \eqref{ovopt}.
\end{proof}

Consider now the shape optimization problem for the Dirichlet energy:
\be\label{dircounter}
\min\Big\{\F(\O)\ :\ \O\subset D\Big\}
\ee
where
\be\label{enecount}
\F(\O)=\min\left\{\int_\O\Big[\frac1p|\nabla u|^p-f(x)u\Big]\,dx\ :\ u\in W^{1,p}_0(\O)\right\}.
\ee
Here $f$ is always assumed in $W^{-1,p'}(D)\cap L^1(D)$. This formulation incorporates in many cases the same optimization problem as \eqref{dircounter} with a measure constraint of the form $\big\{|\O|\le m\big\}$, through the addition of a term $\lambda|\O|$ in the cost functional, where $\lambda$ is a Lagrange multiplier. However, a rigorous proof of the equivalence of the two formulations is not available in full generality, see for instance \cite{bhp05} and \cite{hay99} for a discussion on this matter. By Theorem \ref{thopen}, when in addition $f\in L^q(D)$ with $q>d/p$, we obtain that the optimal domains $\O_{opt}$ are open sets. In the following example we show that $\O_{opt}$ may be not open if $p\le d$ and $q=1$. More precisely, we show that for every $p$-quasi open set $\O$ we can find $f\in W^{-1,p'}(D)\cap L^1(D)$ such that $\O$ is the optimal domain for the shape optimization problem \eqref{dircounter}.

\begin{exam}\label{exnotopen}
Let $p>1$ and let $\overline\O$ be any quasi open subset of $D$. Let $w$ be the torsion function associated to $\overline\O$, that is the unique solution of the PDE
$$-\Delta_p w=1\hbox{ in }\overline\O,\qquad w\in W^{1,p}_0(\overline\O),$$
intended as the minimizer on $W^{1,p}_0(\overline\O)$ of the functional
$$\int_\O\Big[\frac1p|\nabla u|^p-u\Big]\,dx$$
or equivalently as the solution of the PDE in its weak form
$$\int_D|\nabla w|^{p-2}\nabla w\nabla\phi\,dx=\int_D w\phi\,dx\qquad\hbox{for every }\phi\in W^{1,p}_0(\overline\O).$$
We claim that the function $f=-\Delta_p(w^{p'})$ is in $W^{-1,p'}(D)\cap L^1(D)$. Indeed, by the maximum principle $w$ is bounded and, since $\nabla(w^{p'})=p'w^{p'-1}\nabla w$, we get that $w^{p'}\in W^{1,p}_0(D)$ and so $f\in W^{-1,p'}(D)$. Moreover, the equality
$$\Delta_p(w^{p'})=\Big(\frac{p}{p-1}\Big)^{p-1}\Big[w\Delta_p w+|\nabla w|^p\Big]$$
gives that $f\in L^1(D)$. Indeed, $|\nabla w|^p\in L^1(D)$ and, by the equality $w\Delta_p w=-w$ on $D$ we obtain that the term $w\Delta_p w$ is actually in $L^\infty(D)$.

Now, we apply Proposition \ref{noaux} with $f$ as above; this implies that the function $\bar u$ coincides with $w^{p'}$ and so $\O_{opt}$ is the set $\{w^{p'}\ne0\}$, which coincides with $\overline\O$.
\end{exam}

\begin{rema}
We have seen that if $p>d$ or if the function $f$ in \eqref{enecount} is in $L^q(D)$ with $q>d/p$ then the optimal set $\O_{opt}$ is open. On the contrary, if $q=1$ we can construct a counterexample showing that $\O_{opt}$ is merely a $p$-quasi open set. This picture is sharp when $p=d$ in the sense that in this case $q=1$ is the borderline situation and $q\le1$ gives a counterexample, while $q>1$ gives that $\O_{opt}$ is open. When $p<d$ we do not know if similar counterexamples hold in the case $1<q\le d/p$.
\end{rema}

\section{Cases when optimal domains have finite perimeter}\label{speri}

In this section we show that, under some assumptions slightly stronger than (f1), (f2), f3) the optimal set $\overline\O$ obtained in Section \ref{sexis} has a finite perimeter. We adapt the proof contained in \cite{db2012} to our general case. The assumptions we need are:
\begin{itemize}
\item[(f2'')]there exist $c>0$ and $\alpha<\lambda_{1,p}(D)$ such that for every $x,s,z$
$$c\big(|z|^p-\alpha|s|^p+1\big)\le f(x,s,z);$$
\item[(f3'')]there exist $K>0$ and $a\in L^1(D)$ such that for every $x,s,t,z$
$$\big|f(x,s,z)-f(x,t,z)\big|\le K|s-t|\big(a(x)+|s|^{p^*}+|t|^{p^*}+|z|^p\big),$$
where $p^*=dp/(d-p)$ (with $p^*$ any positive number if $p=d$ and $|\cdot|^{p^*}$ replaced by any continuous function if $p>d$) is the Sobolev exponent associated to $p$.
\end{itemize}
Note that in particular condition (f2'') implies a condition stronger than (f3):
$$f(x,0,0)\ge c\qquad\hbox{with }c>0.$$

\begin{theo}\label{tper}
Under assumptions (f1), (f2''), (f3''), the optimal domain $\overline\O$ obtained in the existence Theorem \eqref{exth} has a finite perimeter.
\end{theo}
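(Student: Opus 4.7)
\medskip\noindent\textbf{Plan.}
Recall that $\overline\O=\{\bar u\ne0\}$, where $\bar u$ minimizes the auxiliary functional from the proof of Theorem~\ref{exth},
$$G(u):=\int_D f(x,u,\nabla u)1_{\{u\ne0\}}\,dx,\qquad u\in W^{1,p}_0(D).$$
Adapting the scheme of \cite{db2012} to the $p$-growth setting, the plan is to test $\bar u$ against the competitor
$$u_\eps(x):=\operatorname{sgn}(\bar u(x))\,\big(|\bar u(x)|-\eps\big)_+,\qquad\eps>0,$$
which lies in $W^{1,p}_0(D)$, has $\{u_\eps\ne0\}=\{|\bar u|>\eps\}$, coincides with $\bar u$ in gradient on that set, and satisfies $|u_\eps-\bar u|\le\eps$ everywhere. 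The role of $u_\eps$ is to turn off the indicator $1_{\{u\ne0\}}$ on the thin layer $\{0<|\bar u|\le\eps\}$; the $+1$ term in (f2'') will then force this layer to be small.

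Writing $G(\bar u)\le G(u_\eps)$ and rearranging yields
$$\int_{\{0<|\bar u|\le\eps\}}f(x,\bar u,\nabla\bar u)\,dx\le\int_{\{|\bar u|>\eps\}}\big[f(x,u_\eps,\nabla\bar u)-f(x,\bar u,\nabla\bar u)\big]\,dx.$$
The right-hand side is controlled by (f3''): since $|u_\eps-\bar u|\le\eps$ on $\{|\bar u|>\eps\}$, it is at most
$$K\eps\int_D\big(a(x)+2|\bar u|^{p^*}+|\nabla\bar u|^p\big)\,dx\le C\eps,$$
using the Sobolev embedding $W^{1,p}_0(D)\hookrightarrow L^{p^*}(D)$ (with the corresponding reading of (f3'') in the cases $p=d$ and $p>d$). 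For the left-hand side, choose $\eps$ small enough that $\alpha\eps^p\le1/2$; then (f2'') gives, on the layer,
$$f(x,\bar u,\nabla\bar u)\ge c|\nabla\bar u|^p+\frac{c}{2}.$$
Combining these two halves,
$$\big|\{0<|\bar u|\le\eps\}\big|+\int_{\{0<|\bar u|\le\eps\}}|\nabla\bar u|^p\,dx\le C'\eps.$$

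The perimeter bound now follows from a H\"older--coarea step. By H\"older,
$$\int_{\{0<|\bar u|\le\eps\}}|\nabla\bar u|\,dx\le\big|\{0<|\bar u|\le\eps\}\big|^{(p-1)/p}\Big(\int_{\{0<|\bar u|\le\eps\}}|\nabla\bar u|^p\,dx\Big)^{1/p}\le C''\eps,$$
and the coarea formula applied to $|\bar u|$ gives
$$\int_0^\eps\HH^{d-1}\big(\{|\bar u|=t\}\big)\,dt=\int_{\{0<|\bar u|\le\eps\}}|\nabla\bar u|\,dx\le C''\eps.$$
The mean value theorem then produces $t_\eps\in(0,\eps)$ with $\HH^{d-1}(\{|\bar u|=t_\eps\})\le C''$; using the identity $\per(\{|\bar u|>t\})=\HH^{d-1}(\{|\bar u|=t\})$ valid for a.e.\ $t>0$, one may even choose such $t_\eps$ with $\per(\{|\bar u|>t_\eps\})\le C''$. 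Since $\chi_{\{|\bar u|>t_\eps\}}\to\chi_{\overline\O}$ in $L^1(D)$ as $\eps\to0$, the $L^1$-lower semicontinuity of perimeter gives $\per(\overline\O)\le C''$.

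The only delicate step is the initial choice of the competitor $u_\eps$, which simultaneously preserves $\nabla\bar u$ on the bulk and kills the indicator on an $\eps$-thin layer, so that the $+1$ of (f2'') converts directly into a measure bound for that layer. No other serious obstacle is expected; the main bookkeeping point is that the right-hand side estimate from (f3'') is genuinely $O(\eps)$, which requires $|\bar u|^{p^*}$ to be integrable --- handled by the Sobolev embedding together with the relaxed reading of $|\cdot|^{p^*}$ allowed in (f3'') when $p\ge d$.
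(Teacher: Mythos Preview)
Your proof is correct and follows essentially the same approach as the paper: the same truncation competitor $u_\eps=\operatorname{sgn}(\bar u)(|\bar u|-\eps)_+$, the same rearrangement of the optimality inequality, the same use of (f3'') for the $O(\eps)$ upper bound and of (f2'') for the lower bound on the layer, and the same H\"older--coarea conclusion. If anything, you are slightly more explicit than the paper in justifying the integrability of $|\bar u|^{p^*}$ via the Sobolev embedding and in invoking the $L^1$-lower semicontinuity of the perimeter to pass from the superlevel sets $\{|\bar u|>t_\eps\}$ to $\overline\O$.
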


\begin{proof}
Let $u$ be a solution of the auxiliary minimization problem \eqref{auxpb} and let, for every $\eps>0$
$$u_\eps=(u-\eps)^+-(u-\eps)^-,\qquad A_\eps=\big\{0<|u|\le\eps\big\}.$$
Note that
$$u_\eps=\begin{cases}
u-\eps&\hbox{if }u>\eps\\
u+\eps&\hbox{if }u<-\eps\\
0&\hbox{if }|u|\le\eps
\end{cases}
\qquad\hbox{and}\qquad
\nabla u_\eps=\begin{cases}
\nabla u&\hbox{a.e. on }\{|u|>\eps\}\\
0&\hbox{a.e. on }\{|u|\le\eps\}.
\end{cases}$$
Then, by the optimality of the function $u$, we have
\[\begin{split}
\int_D f(x,u,\nabla u)1_{\{u\ne0\}}\,dx
&\le\int_D f(x,u_\eps,\nabla u_\eps)1_{\{u_\eps\ne0\}}\,dx\\
&=\int_{D\setminus A_\eps}f(x,u_\eps,\nabla u)\,dx
\end{split}\]
so that, using assumption (f3''),
\[\begin{split}
\int_{A_\eps}f(x,u,\nabla u)\,dx
&\le\int_{D\setminus A_\eps}\big|f(x,u_\eps,\nabla u)-f(x,u,\nabla u)\big|\,dx\\
&\le C\int_D|u_\eps-u|\big(a(x)+|u_\eps|^{p^*}+|u|^{p^*}+|\nabla u|^p\big)\,dx\le C\eps.
\end{split}\]
We now use assumption (f2'') and we obtain
$$C\eps\ge\int_{A_\eps}f(x,u,\nabla u)\,dx\ge c\int_{A_\eps}\big(|\nabla u|^p-\alpha|u|^p\big)\,dx+c|A_\eps|,$$
which implies
$$\int_{A_\eps}|\nabla u|^p\,dx+|A_\eps|\le C\eps.$$
By H\"older inequality this gives
$$\int_{A_\eps}|\nabla u|\,dx\le C\eps.$$
We use now the coarea formula and we deduce
$$\int_0^\eps\HH^{d-1}\big(\partial^*\{|u|>t\}\big)\,dt\le C\eps.$$
Thus there exists a sequence $\delta_n\to0$ such that
$$\HH^{d-1}\big(\partial^*\{|u|>\delta_n\}\big)\le C\qquad\hbox{for every }n$$
and finally this implies that
$$\HH^{d-1}(\partial^*\overline\O)=\HH^{d-1}\big(\partial^*\{|u|>0\}\big)\le C,$$
as required.
\end{proof}

\ack This work is part of the project 2015PA5MP7 {\it``Calcolo delle Variazioni''} funded by the Italian Ministry of Research and University. The first author is members of the {\it``Gruppo Nazionale per l'Analisi Matematica, la Probabilit� e le loro Applicazioni''} (GNAMPA) of the {\it``Istituto Nazionale di Alta Matematica''} (INDAM). The second author gratefully acknowledges the financial support of the Doctoral School in Mathematics of the University of Pisa.


\end{document}